\documentclass[12pt]{amsart}

\setlength{\textwidth}{15cm}
\setlength{\oddsidemargin}{1cm}
\setlength{\evensidemargin}{1cm}
\setlength{\textheight}{21cm}
\setlength{\parskip}{2mm}
\setlength{\parindent}{0em}
\setlength{\headsep}{1.5cm}

\usepackage{amsmath, amsthm, amssymb}
\usepackage{url} 
\usepackage{graphicx}
\usepackage{xcolor}
\usepackage{moreverb}

\usepackage{fancyvrb}

\def\r{\mathbb R}

\def\s{\mathbb S}

\def\e{\mathbb E}

\usepackage{listings}
 \lstnewenvironment{code}[1][]
  {\lstset{#1}}
  {}

\lstset{
  frame = single,
  basicstyle = \ttfamily\small,
  language = Mathematica
}

\newtheorem{theorem}{Theorem}[section]
 \newtheorem{proposition}[theorem]{Proposition}
 \newtheorem{corollary}[theorem]{Corollary}
\theoremstyle{definition}
\newtheorem{definition}[theorem]{Definition}
\newtheorem{example}[theorem]{Example}
\newtheorem{remark}[theorem]{Remark}

\begin{document}

\title{Rectifying submanifolds of Riemannian manifolds with anti-torqued axis}

\author{Muhittin Evren Aydin$^1$}
\address{$^1$Department of Mathematics, Faculty of Science, Firat University, Elazig,  23200 Turkey}
\email{meaydin@firat.edu.tr}
\author{ Adela Mihai$^2$}
 \address{$^2$Technical University of Civil Engineering Bucharest,
Department of Mathematics and Computer Science, 020396, Bucharest, Romania
and Transilvania University of Bra\c{s}ov, Interdisciplinary Doctoral
School, 500036, Bra\c{s}ov, Romania}
 \email{adela.mihai@utcb.ro, adela.mihai@unitbv.ro}
\author{ Cihan Ozgur$^3$}
 \address{$^3$Department of Mathematics, Izmir Democracy University, 35140, Karabaglar, Izmir, Turkey}
 \email{cihan.ozgur@idu.edu.tr}

\keywords{Riemannian manifold; anti-torqued vector field; warped product; rectifying submanifold}
\subjclass{53B40, 53C42, 53B20}
\begin{abstract}
In this paper we study rectifying submanifolds of a Riemannian manifold endowed with an anti-torqued vector field. For this, we first determine a necessary and sufficient condition for the ambient space to admit such a vector field. Then we characterize submanifolds for which an anti-torqued vector field is always assumed to be tangent or normal. A similar characterization is also done in the case of the torqued vector fields. Finally, we obtain that the rectifying submanifolds with anti-torqued axis are the warped products whose warping function is a first integration of the conformal scalar of the axis.
\end{abstract}
\maketitle

\section{Introduction} \label{intro}

In 2003, B.-Y. Chen \cite{crc0} initiated the study of the {\it rectifying curves}, a remarkable class of space curves in the Euclidean $3$-space $\e^3$, whose position vectors always lie in its rectifying plane, closely related to the famous space curves, e.g. geodesics, helices, centrodes and etc. Since then, it has been of great interest, and without making a complete list, we refer to \cite{cgb,crc1,crc2,dca,inpt,in,jamr,loy0,loy1}. 

The notion of a rectifying curve was extended to higher dimensions in \cite{crs0,crs1} by defining a {\it rectifying submanifold} in $\e^m$ as a submanifold for which the position vector always lie in its rectifying space, i.e. the orthogonal complement to the first normal space. Such submanifolds were completely classified in the cited works \cite{crs0,crs1}. The counterpart in the pseudo-Euclidean setting can be found in \cite{crs2}. Afterwards, such submanifolds were adapted to an arbitrary Riemannian manifold as follows. 

\begin{definition} \cite{crs3} \label{def-rs}
Let $V$ a non-zero vector field on a Riemannian $m$-manifold $(\widetilde{M},\tilde{g})$ and $M$ a submanifold of $\widetilde{M}$. Let also, for a point $p\in M$, $\text{Im } h_p$ be the first normal space of $M$ in $\widetilde{M}$. Then, $M$ is called a {\it rectifying submanifold} in $\widetilde{M}$ with respect to $V$ if the normal component $V^{\perp} \neq 0$ on $M$ and 
\begin{equation} \label{rs-intro}
\tilde{g}(V(p),\text{Im } h_p)=0, \quad \forall p\in M.
\end{equation}
\end{definition}

We call the submanifold $M$ a {\it rectifying submanifold with axis} $V$. With this definition, the  study of rectifying submanifolds in Riemannian manifolds is an interesting problem, especially under certain conditions on the axis $V$. In order to clarify it, let $\widetilde{\nabla}$ be the Levi-Civita connection on $\widetilde{M}$. A vector field $V$ on $\widetilde{M}$ is called a {\it torse-forming} vector field \cite{ya0} if
\begin{equation}\label{tors}
\widetilde{\nabla}_X V=f X + \omega (X)V, \quad \forall X\in \Gamma(T\widetilde{M}),
\end{equation}
for a $1$-form $\omega$ and a smooth function $f$ on $\widetilde{M}$, where the $1$-form $\omega$ and the function $f$ are called the {\it generating form} and {\it conformal scalar} of $V$, respectively \cite{mih0}. Particularly, it is called a {\it concircular} vector field if the generating form $\omega$ vanishes identically. Such vector fields have applications in Physics, see \cite{c0,dr,mam0,mam1}.

A subclass of torse-forming vector fields, called {\it torqued} vector fields, was introduced by imposing the condition $\omega(V) =0$ on $\tilde{M}$ in \eqref{tors} (see \cite{crs3}). Then, in the same paper \cite{crs3}, a necessary and sufficient condition for a Riemannian manifold (also Lorentzian manifold) to admit a torqued vector field was determined. In addition, some characterizations of rectifying submanifolds in a Riemannian manifold with torqued axis were given in \cite{crs3}.

Another subclass of torse-forming vector fields was introduced by S. Deshmukh et al. \cite{dan}, so-called {\it anti-torqued} vector fields: let $V$ be a torse-forming vector field given by \eqref{tors} and $W$ dual to $\omega$. Particularly, assume $W=-fV$, and so $\omega(X)=-f\nu(X)$, where $\nu$ is the dual of $V$ and $X\in T\widetilde{M}$. Notice here that, in the case of torqued vector fields, $W$ is perpendicular to $V$. An anti-torqued vector field is a non-vanishing vector field $V$ on $\widetilde{M}$ fulfilling
\begin{equation}\label{a-tor}
\widetilde{\nabla}_X V=f (X - \nu (X)V), \quad \forall X\in \Gamma(T\widetilde{M}).
\end{equation}

There are important differences between torqued and anti-torqued vector fields. For example it can be seen from \eqref{a-tor} that a unit anti-torqued vector field is also a unit geodesic vector field, which is not possible for the torqued ones. Also an anti-torqued vector field cannot be reduced to a concircular vector field, in constrast to a torqued vector field. Moreover, there is no a torqued vector field that is also a gradient vector field (see \cite[Proposition 3.1]{crs3}), while there are examples for the other case \cite{dan}. With the features, the anti-torqued vector fields generate an independent subclass of the torse-forming vector fields on Riemannian manifolds.

The global existence of a nonzero vector field on a Riemannian manifold is a tool to better understand its geometry and topology. For instance, various characterizations of spheres and Euclidean spaces in \cite{dk,dan,dta,dmtv,id} were done through certain vector fields, e.g. a geodesic vector field, a Killling vector field, and a torse-forming vector field. In particular, while on a sphere there are no globally defined anti-torqued vector fields (except the trivial ones), on a Euclidean space they can be globally defined \cite{dan}.

This paper is organized as follows. In Sect. \ref{pre}, we first give the fundamental formulas and equations in submanifold theory and recall some well-known equalities for warped products to use in our calculations. In Sect. \ref{anti-warp}, we state that a Riemannian manifold admits an anti-torqued vector field if and only if it is locally a warped product (Thm. \ref{nec-suf}), which is similar to a consequence in \cite{crs3} that it is locally a twisted product in the case of the torqued vector fields. After that we characterize submanifolds, in Sect. \ref{charac-sub}, on which the tangential or normal components of a torqued or an anti-torqued vector field vanish identically (Propositions \ref{anti-tan0-normal0} and \ref{torqued-normal0}). Several examples are provided. Sect. \ref{rec-sub-anti} is devoted to classify rectifying submanifolds when the axis is assumed to be anti-torqued. Clearly, we obtain that a rectifying submanifold of a Riemannian manifold with axis $V$ is a warped product (see Thm. \ref{rect-sub-anti}) such that the warping function is a first integration of the conformal scalar of $V$. In the Euclidean ambient space, the examples for such submanifolds are also given.

\section{Preliminaries} \label{pre}

Let $(\widetilde{M},\tilde{g})$ be a Riemannian $m$-manifold and $\widetilde{\nabla}$ the Levi-Civita connection on $\widetilde{M}$. Let $\Gamma(T\widetilde{M})$ the set of the sections of the tangent bundle of $\widetilde{M}$. The Riemannian curvature tensor of $\widetilde{\nabla}$ is
$$
\widetilde{R}(X,Y)Z=\widetilde{\nabla}_X\widetilde{\nabla}_YZ-\widetilde{\nabla}_Y\widetilde{\nabla}_XZ-\widetilde{\nabla}_{[X,Y]}Z, \quad X,Y,Z \in \Gamma(T\widetilde{M}).
$$
Let $u,v\in T_p\widetilde{M}$ be two linearly independenty vectors tangent to $\widetilde{M}$ at $p \in \widetilde{M}$ and $\pi=\text{Span}\{u,v\}$ a plane section. Then, the sectional curvature is defined by
$$
\widetilde{K}(\pi)=\widetilde{K}(u,v)=\frac{\tilde{g}(\widetilde{R}(u,v)v,u)}{\tilde{g}(u,u)\tilde{g}(v,v)-\tilde{g}(u,v)^2}.
$$
We call $\widetilde{M}$ a {\it real space form} if the sectional curvature $\widetilde{K}$ is constant, say $c$, for every plane section $\pi$ and every point $x \in \widetilde{M}$. In this case, the curvature tensor is given by
$$
\widetilde{R}(X,Y)Z=c\{\tilde{g}(Y,Z)X-\tilde{g}(X,Z)Y\}, \quad \forall X \in \Gamma(T\widetilde{M}).
$$

Let $M$ be an $n$-dimensional submanifold of $\widetilde{M}$, $T_pM$ and $N_pM$ the tangent and the normal spaces of $M$ in $\widetilde{M}$ at some point $p\in M$, respectively. A natural orthogonal decomposition is written by
$$
T_p\widetilde{M}=T_pM \oplus N_pM , \quad \forall p \in M.
$$
The formula of Gauss is given by
$$
\widetilde{\nabla}_XY=\nabla_XY+h(X,Y), \quad \forall X,Y \in\Gamma(TM),
$$
where $\nabla$ is the induced Levi-Civita connection on $M$ and $h$ the second fundamental form. A {\it totally geodesic submanifold} is defined by the condition $h=0$. The {\it first normal space} of $M$ at some point $p\in M$ is a subspace of $N_pM$ defined by
$$
\text{Im }h_p =\text{Span}\{h(X,Y): X,Y \in T_pM \}.
$$

The formula of Weingarten is given by
$$
\tilde{\nabla}_X\xi=-A_{\xi}(X)+D_X\xi , \quad \forall X \in\Gamma(TM), \quad \forall \xi \in \Gamma(NM),
$$
where $A$ is the shape operator of $M$ and $D$ is the normal connection in $\Gamma(NM)$. A  relationship between $h$ and $A$ is given by
$$
\tilde{g}(h(X,Y),\xi)=g(A_{\xi}X,Y), \quad \forall X,Y \in \Gamma(TM), \quad \forall \xi \in \Gamma(NM),
$$
where $g$ is the induced metric tensor of $M$.

Let $\{e_1,...,e_n\}$ be an orthonormal frame on $M$. The {\it mean curvature} is defined by
$$
H=\frac{1}{n}\sum_{i=1}^nh(e_i,e_i).
$$
We call $M$ {\it umbilical} with respect to a vector field $\xi$ normal to $M$ if $A_{\xi}=\mu\text{Id}$, for a smooth function $\mu$ on $M$. It is also called {\it totally umbilical} if it is umbilical with respect to every normal direction. 

From the formulas of Gauss and Weingarten, it follows
\begin{equation}\label{gauss-eq1}
\left. 
\begin{array}{c}
\tilde{R}(X,Y)Z= R(X,Y)Z-A_{h(Y,Z)}X+A_{h(X,Z)}Y+h(X,\nabla_YZ) \\
-h(Y,\nabla_XZ)-h([X,Y],Z)+D_Xh(Y,Z)-D_Yh(X,Z)
\end{array}%
\right. 
\end{equation}
and then
\begin{equation}\label{gauss-eq2}
\left. 
\begin{array}{c}
g(R(X,Y)Z,W)=\tilde{g}(\widetilde{R}(X,Y)Z,W)+\tilde{g}(h(X,W),h(Y,Z))\\-\tilde{g}(h(X,Z),h(Y,W)).
\end{array}%
\right. 
\end{equation}
This is known as the {\it Gauss equation}.

\subsection{Warped products}

Let $(M_1,g_1)$ and $(M_2,g_2)$ be two Riemannian manifolds and consider the natural projections of the product $M_1\times M_2$ 
$$
\pi_i : M_1 \times M_2 \to M_i, \quad i=1,2.
$$
A {\it twisted product} $\widetilde{M}=M_1 \times_{\lambda} M_2$ is the manifold $M_1 \times M_2$ equipped with the metric tensor
$$
\tilde{g}=g_1+\lambda^2g_2,
$$
where $\lambda$ is a smooth function on $\widetilde{M}$. Particularly, if the function $\lambda$ depends only on $M_1$ then $\widetilde{M}$ is called a {\it warped product} and $\lambda$ {\it warping function}. $M_1$ and $M_2$ are said to be the base and the fiber of $\widetilde{M}$, respectively.

The leaves $M_1 \times \{ q\} = \pi_2^{-1}(q)$ and the fibers $\{p \} \times M_2 = \pi_1^{-1}(p)$ are submanifolds of $\widetilde{M}$. We call a vector tangent to leaves (resp. fibers) {\it horizontal} (resp. {\it vertical}). Denote by $\mathcal{L}(M_1)$ (resp. $\mathcal{L}(M_2)$) the set of all horizontal (resp. vertical) lifts.

A useful result that we will use later is the following.

\begin{proposition} \cite{cbook} \label{warp-con}
Let $\widetilde{\nabla}$ and $^i\widetilde{\nabla}$ be the Levi-Civita connections on $\widetilde{M}=M_1 \times_{\lambda} M_2$ and $M_i$ $(i=1,2)$, respectively. Let also $X_i,Y_i \in \mathcal{L}(M_i)$. Then,
\begin{enumerate}
\item $\widetilde{\nabla}_{X_1}Y_1 \in \mathcal{L}(M_1) $ is the lift of $^1\widetilde{\nabla}_{X_1}Y_1$,
\item $\widetilde{\nabla}_{X_1}X_2 =\widetilde{\nabla}_{X_2}X_1 =(X_1 \log \lambda)X_2$.
\end{enumerate}
\end{proposition}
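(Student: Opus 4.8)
The plan is to derive both formulas directly from the Koszul formula for the Levi-Civita connection,
\begin{align*}
2\tilde{g}(\widetilde{\nabla}_X Y, Z) &= X\tilde{g}(Y,Z) + Y\tilde{g}(X,Z) - Z\tilde{g}(X,Y) \\
&\quad + \tilde{g}([X,Y],Z) - \tilde{g}([X,Z],Y) - \tilde{g}([Y,Z],X),
\end{align*}
evaluated on horizontal and vertical lifts. The only non-formal input is a short list of elementary identities for lifts, which I would record first (or cite): (i) $\tilde{g}(\mathcal{L}(M_1),\mathcal{L}(M_2))=0$; (ii) for $X_1,Y_1\in\mathcal{L}(M_1)$ the function $\tilde{g}(X_1,Y_1)$ is constant along the fibers, while for $X_2,Y_2\in\mathcal{L}(M_2)$ one has $\tilde{g}(X_2,Y_2)=\lambda^2\,(g_2(X_2,Y_2)\circ\pi_2)$; (iii) a horizontal lift annihilates functions pulled back from $M_2$ and a vertical lift annihilates functions pulled back from $M_1$; (iv) $[X_1,Y_1]$ is the lift of $[X_1,Y_1]$ on $M_1$, $[X_2,Y_2]$ is the lift of $[X_2,Y_2]$ on $M_2$, and $[X_1,X_2]=0$.

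For part (1), I would set $X=X_1$, $Y=Y_1$ and test the Koszul formula against $Z=Z_2\in\mathcal{L}(M_2)$ and against $Z=Z_1\in\mathcal{L}(M_1)$ separately. Against $Z_2$, each of the six terms on the right vanishes: the three metric terms because $\tilde{g}(X_1,Y_1)$ is a fiber-constant function and $\tilde{g}(X_1,Z_2)=\tilde{g}(Y_1,Z_2)=0$ by (i)--(iii), and the three bracket terms because $[X_1,Y_1]\in\mathcal{L}(M_1)$ and $[X_1,Z_2]=[Y_1,Z_2]=0$ by (iv). Hence $\widetilde{\nabla}_{X_1}Y_1\perp\mathcal{L}(M_2)$, i.e. it is horizontal. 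Against $Z_1$, each term reduces to the corresponding term of the Koszul formula for $g_1$ on $M_1$, so $2\tilde{g}(\widetilde{\nabla}_{X_1}Y_1,Z_1)=2g_1(\,^1\widetilde{\nabla}_{X_1}Y_1,Z_1)$; together with horizontality this gives the claim.

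For part (2), torsion-freeness together with $[X_1,X_2]=0$ gives $\widetilde{\nabla}_{X_1}X_2=\widetilde{\nabla}_{X_2}X_1$. I would then compute $\tilde{g}(\widetilde{\nabla}_{X_1}X_2,Z)$ from Koszul. Tested against $Z_1\in\mathcal{L}(M_1)$ all six terms vanish, so $\widetilde{\nabla}_{X_1}X_2$ is vertical. Tested against $Z_2\in\mathcal{L}(M_2)$, five terms vanish and the only survivor is $X_1\tilde{g}(X_2,Z_2)$; by (ii) and (iii),
\begin{align*}
X_1\tilde{g}(X_2,Z_2) &= X_1(\lambda^2)\,(g_2(X_2,Z_2)\circ\pi_2) = 2\lambda(X_1\lambda)\,(g_2(X_2,Z_2)\circ\pi_2) \\
&= 2(X_1\log\lambda)\,\tilde{g}(X_2,Z_2).
\end{align*}
Thus $\tilde{g}(\widetilde{\nabla}_{X_1}X_2,Z_2)=\tilde{g}((X_1\log\lambda)X_2,Z_2)$ for all vertical $Z_2$, and since $\widetilde{\nabla}_{X_1}X_2$ is itself vertical, $\widetilde{\nabla}_{X_1}X_2=(X_1\log\lambda)X_2$.

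The computation has no genuine obstacle; the one point that needs care is the bookkeeping of which object lives on which factor --- in particular the identity for $X_1\tilde{g}(X_2,Z_2)$ above, where one must remember that $g_2(X_2,Z_2)$ is constant along horizontal directions, so that the derivative falls entirely on the warping function $\lambda$. Since the proposition is quoted verbatim from \cite{cbook}, the above is simply a reconstruction of the standard argument.
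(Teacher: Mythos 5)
Your Koszul-formula computation is correct: the lift identities you record are the right ones, all the cancellations you claim do occur, and the surviving term $X_1\tilde{g}(X_2,Z_2)=2(X_1\log\lambda)\tilde{g}(X_2,Z_2)$ is handled properly, using that $g_2(X_2,Z_2)\circ\pi_2$ is killed by horizontal lifts while $\lambda$ lives on $M_1$. The paper itself gives no proof (the proposition is quoted from \cite{cbook}), and your argument is precisely the standard one found in that source, so there is nothing further to reconcile.
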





\section{Anti-Torqued Vector Fields and Warped Products} \label{anti-warp}

Recall that a Riemannian $m$-manifold $\widetilde{M}$ admits a torqued vector field if and only if it is locally the twisted product $I\times_{\lambda}F$, where $F$ is a Riemannian $(m-1)$-manifold and $I$ is an open real interval (see \cite[Theorem 3.1]{crs3}). In this section, we consider the case that $\widetilde{M}$ admits an anti-torqued vector field.

\begin{theorem} \label{nec-suf}
Let $F$ be a Riemannian $(m-1)$-manifold, $I$ an open real interval and $\lambda$ a smooth function on $I$. Then, a Riemannian $m$-manifold admits an anti-torqued vector field if and only if it is locally the warped product $I\times_{\lambda}F$.
\end{theorem}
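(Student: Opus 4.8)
The plan is to prove both directions by an explicit local construction, exploiting the fact that a unit anti-torqued field is a unit geodesic field, as noted after \eqref{a-tor}.

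\emph{Sufficiency.} Suppose $\widetilde{M}$ is locally the warped product $I\times_{\lambda}F$ with coordinate $t$ on $I$. Let $V=\partial_t$, the canonical lift of the unit field on $I$; its dual $1$-form is $\nu=dt$ (since $\tilde g(\partial_t,\partial_t)=1$ on the base factor and $\partial_t$ is orthogonal to $\mathcal{L}(F)$). I would verify \eqref{a-tor} with $f=(\log\lambda)'=\lambda'/\lambda$ by checking it on a frame adapted to the product. For $X=\partial_t$ itself, Proposition \ref{warp-con}(1) gives $\widetilde{\nabla}_{\partial_t}\partial_t=0$ (the base $I$ is one-dimensional and flat), while the right-hand side of \eqref{a-tor} is $f(\partial_t-\nu(\partial_t)\partial_t)=f(\partial_t-\partial_t)=0$: agreement. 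For $X=X_2\in\mathcal{L}(F)$, Proposition \ref{warp-con}(2) gives $\widetilde{\nabla}_{X_2}\partial_t=(\partial_t\log\lambda)X_2=f X_2$, and the right-hand side of \eqref{a-tor} is $f(X_2-\nu(X_2)\partial_t)=f X_2$ since $\nu(X_2)=dt(X_2)=0$: agreement. By linearity this establishes \eqref{a-tor}, so $V$ is anti-torqued (it is nowhere zero and $f$ is a genuine function on $\widetilde M$, pulled back from $I$).

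\emph{Necessity.} Conversely, let $V$ be an anti-torqued vector field on $\widetilde{M}$ satisfying \eqref{a-tor}. First I would record that $V$ has constant length: differentiating $\tilde g(V,V)$ along any $X$ gives $X\,\tilde g(V,V)=2\tilde g(\widetilde{\nabla}_XV,V)=2f(\tilde g(X,V)-\nu(X)\tilde g(V,V))$, and since $\nu(X)=\tilde g(X,V)$ and $\tilde g(V,V)$ will turn out to equal... — more carefully, one first normalizes: set $U=V/\|V\|$ where $\|V\|^2=\tilde g(V,V)$; a short computation from \eqref{a-tor} shows $\|V\|$ is constant (the anti-torqued condition forces $X\|V\|=0$), so after rescaling we may assume $\tilde g(V,V)=1$ and $\nu$ is the metric dual of the unit field $V$. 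Then \eqref{a-tor} reads $\widetilde{\nabla}_XV=f(X-\tilde g(X,V)V)$, i.e. $\widetilde{\nabla}_VV=0$, so the integral curves of $V$ are unit-speed geodesics, and for $X\perp V$ one has $\widetilde{\nabla}_XV=fX$. The distribution $\mathcal{D}=V^{\perp}$ is integrable: for $X,Y\in\Gamma(\mathcal{D})$, $\tilde g([X,Y],V)=\tilde g(\widetilde{\nabla}_XY-\widetilde{\nabla}_YX,V)=X\tilde g(Y,V)-\tilde g(Y,\widetilde{\nabla}_XV)-\big(Y\tilde g(X,V)-\tilde g(X,\widetilde{\nabla}_YV)\big)=-f\tilde g(Y,X)+f\tilde g(X,Y)=0$. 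Let $F$ be a leaf of $\mathcal{D}$ and $I$ an interval parametrizing the geodesic flow $\varphi_t$ of $V$; the map $(t,q)\mapsto\varphi_t(q)$ gives local coordinates in which $V=\partial_t$. It remains to show the metric splits as $dt^2+\lambda(t)^2 g_F$. That the $t$-direction is orthogonal to $F$-directions and unit follows because $V$ is unit and geodesic (the flow of a unit geodesic field preserves orthogonality to $V$: $\partial_t\,\tilde g(\varphi_{t*}X,V)=\tilde g(\widetilde\nabla_V\varphi_{t*}X,V)+\tilde g(\varphi_{t*}X,\widetilde\nabla_VV)=\tilde g(\varphi_{t*}X,\widetilde\nabla_VV)$... one rewrites $\widetilde\nabla_V\varphi_{t*}X=\widetilde\nabla_{\varphi_{t*}X}V+[\text{bracket}]$ and the bracket vanishes, giving $\tilde g(\widetilde\nabla_V\varphi_{t*}X,V)=f\big(\tilde g(\varphi_{t*}X,V)-\tilde g(\varphi_{t*}X,V)\big)=0$ on $\mathcal D$). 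For the conformal factor, let $X,Y$ be vertical (tangent to the leaves, commuting with $\partial_t$ since they are lifted along the flow); then
\begin{equation*}
\partial_t\,\tilde g(X,Y)=\tilde g(\widetilde{\nabla}_VX,Y)+\tilde g(X,\widetilde{\nabla}_VY)=\tilde g(\widetilde{\nabla}_XV,Y)+\tilde g(X,\widetilde{\nabla}_YV)=2f\,\tilde g(X,Y),
\end{equation*}
using $[V,X]=0$ and $X,Y\perp V$. Hence $\tilde g(X,Y)(t,q)=e^{2\int_0^t f\,ds}\,\tilde g(X,Y)(0,q)$; since the right-hand exponential depends only on $t$ along each geodesic — here one checks $f$ is constant on the leaves, which follows from \eqref{a-tor} together with the geodesic property (differentiate the defining relation and use that $\widetilde\nabla_X V$ is already determined) — we may write $\lambda(t)=e^{\int_0^t f\,ds}$ and conclude $\tilde g=dt^2+\lambda(t)^2 g_F$, a warped product $I\times_\lambda F$. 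This also exhibits $f=\lambda'/\lambda=(\log\lambda)'$, matching the sufficiency computation.

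\emph{Main obstacle.} The delicate point is necessity: showing that the warping candidate $\lambda(t)=e^{\int_0^t f\,ds}$ genuinely depends on $t$ alone, i.e. that the conformal scalar $f$ is constant along each leaf of $V^{\perp}$ and that the leaves can be identified with a fixed manifold $F$ via the geodesic flow with the metric transforming purely conformally. This requires carefully combining \eqref{a-tor}, the geodesic equation $\widetilde{\nabla}_VV=0$, and the integrability of $V^\perp$; one likely extracts the $t$-dependence of $f$ by applying a derivative of \eqref{a-tor} (or the curvature identity) in a leaf direction and using that $\widetilde\nabla_X V=fX$ pins down $Xf$ in terms of $f$ only. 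Everything else is a routine adaptation of the standard local characterization of warped products by a geodesic conformal vector field.
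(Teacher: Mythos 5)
Your sufficiency argument is fine and is essentially the paper's converse direction ($V=\partial_s$, $f=(\log\lambda)'$, checked via Proposition \ref{warp-con}), so I focus on necessity, where there are two problems. First, the normalization step rests on a false lemma: \eqref{a-tor} does \emph{not} force $X\|V\|=0$. Indeed $X\,\tilde g(V,V)=2\tilde g(\widetilde\nabla_XV,V)=2f\,\tilde g(X,V)\bigl(1-\tilde g(V,V)\bigr)$, which vanishes for $X\perp V$ but not in the direction of $V$ unless $|V|\equiv 1$ already; the paper's proof deliberately keeps a non-unit axis, writing $V=\lambda E_1$ and obtaining $E_1(\lambda)=f(1-\lambda^2)$, which is generically nonzero. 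The reduction to a unit axis is still available, but it must be justified by checking directly that $V/|V|$ again satisfies \eqref{a-tor} with conformal scalar $f/|V|$ (this is the content of the remark in Sect.~\ref{charac-sub-antitorq} citing \cite{dan}), not by asserting constancy of $|V|$.

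Second, and more seriously, the step you yourself flag as the ``delicate point'' is precisely the entire content of the theorem, and your suggested fix does not supply it. After the unit normalization, \eqref{a-tor} says only that $V$ is a unit geodesic field whose orthogonal distribution is integrable and umbilic with factor $f$; your flow construction then gives $\tilde g=dt^2+e^{2\int_0^t f}\,g_F$ with $f$ a priori a function on all of $\widetilde M$. Nothing in \eqref{a-tor} pins down $X(f)$ for $X\perp V$: differentiating the defining relation only yields the curvature identity $\widetilde R(X,Y)V=X(f)Y-Y(f)X$ for $X,Y\perp V$ (unit $V$), which relates the leafwise derivatives of $f$ to the ambient curvature rather than making them vanish. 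In fact, for any twisted metric $dt^2+\phi(t,x)^2g_F(x)$ the field $\partial_t$ satisfies \eqref{a-tor} with $f=\partial_t\log\phi$, which in general depends on the leaf coordinates; so the structure you have derived is, by itself, only a twisted product (the situation of the torqued case in \cite{crs3}), and the passage from twisted to warped hinges entirely on proving that $f$ (in the paper's normalization, $f/\lambda$) is constant along the leaves of $V^{\perp}$. This is exactly where the paper's proof takes a different route: it shows the leaves of $\mathcal{D}^{\perp}$ are totally umbilical hypersurfaces with mean curvature $f/\lambda$, argues the foliation is spherical (parallel mean curvature vector, i.e.\ $E_j(f/\lambda)=0$ for $j\ge 2$), and then invokes \cite[Proposition 3]{pr} to obtain the warped product. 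Your outline stops precisely where such an argument is required, so the necessity half of your proposal is incomplete at its crucial step.
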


\begin{proof}
Let $(\widetilde{M},\tilde{g})$ be a Riemannian $m$-manifold endowed with an anti-torqued vector field $V$. We will show that $\widetilde{M}$ is locally a warped product. If $\nu$ is the dual of $V$, then $\nu(X)=\tilde{g}(X,V)$, for every $X \in T\widetilde{M}$ and so \eqref{a-tor} writes as
\begin{equation} \label{nec-suf-atorq}
\tilde{\nabla}_XV=f(X-\tilde{g}(X,V)V),  \quad \forall X \in T\widetilde{M}.
\end{equation}
Denote by $E_1\in \Gamma(T\widetilde{M})$ a unit vector field. There is  a smooth function $\lambda >0$ on $\widetilde{M}$ such that $V=\lambda E_1$. The covariant differentiation of $V$ is
$$
\widetilde{\nabla}_VV=\lambda E_1 (\lambda)E_1+ \lambda^2 \widetilde{\nabla}_{E_1}E_1,
$$
and, by \eqref{nec-suf-atorq},
$$
\widetilde{\nabla}_VV=f\lambda (1- \lambda^2)E_1.
$$
Because $\lambda \neq 0$ and $E_1 \perp \widetilde{\nabla}_{E_1}E_1$, we conclude 
$$
E_1(\lambda)=f(1-\lambda^2), \quad \widetilde{\nabla}_{E_1}E_1=0,
$$
implying the integral curves of $E_1$ are geodesics in $\widetilde{M}$. Setting $\mathcal{D}=\text{Span}\{ E_1\}$, we have an integrable distribution $\mathcal{D}$ in which the leaves are totally geodesic in $\widetilde{M}$.

We now extend $E_1$ to a local orthonormal frame $\{E_1,...,E_m\}$ on $\widetilde{M}$. The connection forms are
$$
\widetilde{\nabla}_{E_j}E_i=\sum_{k=1}^m \omega_i^k(E_j) E_k , \quad \omega_i^k+\omega_k^i=0. \quad i,j \in \{1,...,m \}.
$$
Set $\mathcal{D}^{\perp}=\text{Span}\{E_2,...,E_n \}.$ Because $\tilde{g}(E_j,E_1)=0$, for $j =2,...,m$, \eqref{nec-suf-atorq} is now
\begin{equation} \label{ejv-1}
\tilde{\nabla}_{E_j}V =fE_j, \quad j=2,...,m.
\end{equation}
The covariant differentiation of $V$ is
\begin{equation} \label{ejv-2}
\widetilde{\nabla}_{E_j}V =E_j(\lambda)E_1+\lambda\sum_{k=1}^m \omega_1^k(E_j) E_k,  \quad j=2,...,m.
\end{equation}
It follows from Eqs. \eqref{ejv-1} and \eqref{ejv-2} that
\begin{equation} \label{delta}
\omega_1^k(E_j)=\frac{f}{\lambda} \delta_{jk}, \quad \forall j,k \in \{2,...,m \}, 
\end{equation}
and
\begin{equation*} \label{warped}
E_j(\lambda)=0, \quad j =2,...,m .
\end{equation*}
It is obvious that 
$$
\omega_1^k(E_j)=\omega_1^j(E_k), \quad \forall j,k \in \{2,...,m \}
$$
and so the integrability of the distribution $\mathcal{D}^{\perp}$ is guaranteed by the Frobenius Theorem. In addition, the leaves of $\mathcal{D}^{\perp}$ are totally umbilical hypersurfaces in $\widetilde{M}$ whose mean curvature is $f/\lambda$. Since those leaves are hypersurfaces, their mean curvature vector fields become parallel, namely $\mathcal{D}^{\perp}$ is a spheric foliation. Then, by \cite[Proposition 3]{pr}, $\widetilde{M}$ is locally the warped product $I \times_{\lambda}F$, where $I$ is an open interval, $F$ is a Riemannian $(m-1)$-manifold, and $\lambda >0$ a smooth function on $I $. Hence, the metric tensor $\tilde{g}$ of $\widetilde{M}$ becomes
\begin{equation}\label{tmetric}
\tilde{g} = ds^2 +\lambda^2 g, \quad E_1=\frac{\partial}{\partial s}, 
\end{equation}
where $g$ denotes the metric tensor of $F$. Moreover, the anti-torqued form $\nu=\lambda ds$, where $ds$ is the dual of the lift to $\widetilde{M}$ of $\frac{\partial}{\partial s}$.

Conversely, assume that $\widetilde{M}$ is a non-trivial warped product $I\times_{\lambda}F$ endowed with the metric tensor as in \eqref{tmetric}. By Proposition \ref{warp-con}, we have
$$
\widetilde{\nabla}_{\frac{\partial}{\partial s}}\frac{\partial}{\partial s} =0 
$$
and 
$$
\widetilde{\nabla}_{\frac{\partial}{\partial s}}Y =\widetilde{\nabla}_{Y}\frac{\partial}{\partial s} =\left ( \frac{d \log \lambda}{d s} \right )Y,
$$
for every $Y \in \mathcal{L}(F)$. Define $V= \frac{\partial}{\partial s}$.  We write
\begin{equation}\label{warp-XV0}
\widetilde{\nabla}_XV=\left\{ 
\begin{array}{ll}
0, & X\parallel V, \\ 
\frac{d \log \lambda}{d s}X, & X\perp V,%
\end{array}%
\right. 
\end{equation}
for every $X$ tangent to $\widetilde{M}$. Assume now that $\nu$ is a $1-$form on $\widetilde{M}$ which is the dual of the lift to $\widetilde{M}$ of $\frac{\partial}{\partial s}$. Then, we have $\nu  (Y)=\tilde{g}(Y,V)=0$, for every $Y \in \mathcal{L}(F)$. Hence, we conclude
\begin{equation}\label{warp-XV1}
f(X-\nu  (X)V)=\left\{ 
\begin{array}{ll}
0, & X\parallel V, \\ 
fX, & X\perp V.%
\end{array}%
\right. 
\end{equation}
If we set $f=\frac{d \log \lambda}{ds}$ and if we compare Eqs. \eqref{warp-XV0} and \eqref{warp-XV1}, then we conclude that $V= \frac{\partial}{\partial s}$ is an anti-torqued vector field on $\widetilde{M}$, completing the proof.
\end{proof}

\begin{remark}
By Eqs. \eqref{warp-XV0} and \eqref{warp-XV1}, we point out that $V=\frac{\partial}{\partial s}$ cannot be a concircular vector field because $\widetilde{\nabla}_VV=0$.
\end{remark}

\section{Characterizations of Submanifolds by (Anti-)Torqued Vector Fields} \label{charac-sub}

Let $\widetilde{M}$ be a Riemannian manifold endowed with a certain nonzero vector field $V$. In case $V$ is an anti-torqued or a torqued vector field, we characterize submanifolds of $\widetilde{M}$ depending on whether the tangential or the normal components of $V$ with respect to $M$ are identically zero. We conduct this investigation in separate cases, as there are significant differences between such vector fields, which have already been mentioned in Sect. \ref{intro}.

\subsection{Characterizations via anti-torqued vector fields} \label{charac-sub-antitorq}
We point out that without loss of generality, an anti-torqued vector field can be assumed to be unit (see \cite{dan}). We prove:

\begin{theorem} \label{anti-tan0-normal0}
Let $\widetilde{M}$ be a Riemannian manifold endowed with a unit anti-torqued vector field $V$, $M$ a submanifold of $\widetilde{M}$, and $A$ the shape operator of $M$.
\begin{enumerate}

\item If the tangential component $V^{\top}=0$ identically on $M$, then $V^{\perp}$ is parallel and umbilic direction.

\item If the normal component $V^{\perp}=0$ identically on $M$, then  $|A_{\xi}|=0$, for every $\xi \in \Gamma(NM)$ and
$$
\tilde{R}(X,Y)V^{\top}=R(X,Y)V^{\top}, \quad \forall X,Y \in \Gamma(TM) .
$$
\end{enumerate}
\end{theorem}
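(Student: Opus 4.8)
The plan is to unpack the anti-torqued condition \eqref{a-tor} on $\widetilde{M}$ into its tangential and normal parts along $M$ using the formulas of Gauss and Weingarten, and then read off the two cases by substituting the corresponding vanishing hypothesis. Throughout I write $V = V^{\top} + V^{\perp}$ with $V^{\top} \in \Gamma(TM)$ and $V^{\perp} \in \Gamma(NM)$, and I use that $V$ is unit so $\nu(X) = \tilde g(X,V) = g(X,V^{\top})$ for $X \in \Gamma(TM)$.

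First I would establish the master identity. For $X \in \Gamma(TM)$, expand
\[
\widetilde{\nabla}_X V = \widetilde{\nabla}_X V^{\top} + \widetilde{\nabla}_X V^{\perp} = \nabla_X V^{\top} + h(X,V^{\top}) - A_{V^{\perp}} X + D_X V^{\perp},
\]
using Gauss on $V^{\top}$ and Weingarten on $V^{\perp}$. On the other hand \eqref{a-tor} gives $\widetilde{\nabla}_X V = f X - f\, g(X,V^{\top}) V^{\top} - f\, g(X,V^{\top}) V^{\perp}$. Comparing tangential parts yields
\[
\nabla_X V^{\top} - A_{V^{\perp}} X = f X - f\, g(X,V^{\top}) V^{\top},
\]
and comparing normal parts yields
\[
h(X,V^{\top}) + D_X V^{\perp} = -f\, g(X,V^{\top})\, V^{\perp}.
\]
These two equations are the whole engine of the proof.

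For part (1), set $V^{\top} = 0$ (so $V = V^{\perp}$ and $\nu(X) = 0$ for $X$ tangent). The tangential equation collapses to $-A_{V^{\perp}} X = f X$, i.e. $A_{V^{\perp}} = -f\,\mathrm{Id}$, which says exactly that $V^{\perp}$ is an umbilic direction. The normal equation collapses to $D_X V^{\perp} = 0$ for all $X \in \Gamma(TM)$, i.e. $V^{\perp}$ is parallel in the normal bundle. That settles (1).

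For part (2), set $V^{\perp} = 0$ (so $V = V^{\top}$, still unit, and $\nu(X) = g(X,V^{\top})$). The normal equation becomes $h(X, V^{\top}) = 0$ for every $X \in \Gamma(TM)$; equivalently $A_\xi V^{\top} = 0$ for every normal $\xi$. To promote this to $|A_\xi| = 0$ I would differentiate the identity $\tilde g(V,V) = 1$, which here is $g(V^{\top},V^{\top}) = 1$, and also exploit the tangential equation $\nabla_X V^{\top} = f X - f\, g(X,V^{\top}) V^{\top}$: taking $X = V^{\top}$ gives $\nabla_{V^{\top}} V^{\top} = 0$, so the integral curves of $V^{\top}$ are geodesics in $M$; and for $X \perp V^{\top}$ one gets $\nabla_X V^{\top} = f X$, so $V^{\top}$ is (up to the point where $f$ may vanish) a conformal-type field whose orthogonal distribution is umbilical in $M$. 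The key point is then a rank argument: $h(X,V^{\top}) = 0$ forces every shape operator $A_\xi$ to kill the whole $V^{\top}$-direction, and combining this with the structure of $M$ as (locally) a warped product $I \times_\lambda F$ inherited from Theorem \ref{nec-suf} — where $V^{\top}$ is the base direction $\partial/\partial s$ and the second fundamental form of $M$ in $\widetilde{M}$ must be compatible with the product structure — forces $A_\xi \equiv 0$; hence $h = 0$, i.e. $M$ is totally geodesic. Once $h = 0$, the Gauss equation \eqref{gauss-eq2} (or directly \eqref{gauss-eq1}) gives $\widetilde R(X,Y)Z = R(X,Y)Z$ for all tangent $X,Y,Z$, and specializing $Z = V^{\top}$ yields the claimed curvature identity.

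The main obstacle is the step $h(X,V^{\top}) = 0 \Rightarrow |A_\xi| = 0$: the pointwise condition only annihilates one direction, and turning it into the vanishing of the full second fundamental form genuinely requires using the anti-torqued structure — specifically the local warped-product decomposition from Theorem \ref{nec-suf} and the fact that $V^{\perp} = 0$ pins $M$ to a leaf-like position relative to that decomposition. I would carry this out by writing $M$ inside $I \times_\lambda F$, noting $V^{\top}$ tangent to the $I$-factor, and showing that the Codazzi/Gauss equations for the warped ambient, together with $h(\cdot,V^{\top}) = 0$, propagate the vanishing of $h$ along the $F$-directions as well.
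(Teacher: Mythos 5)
Your master identity and your treatment of part (1) are correct and coincide with the paper's proof. The genuine gap is in part (2): you read $|A_{\xi}|=0$ as the vanishing of the operator itself and set out to prove that $M$ is totally geodesic, whereas in the statement $|A_{\xi}|$ denotes the \emph{determinant} of the shape operator. From your normal equation with $V^{\perp}=0$ you correctly get $h(X,V^{\top})=0$, i.e. $A_{\xi}V^{\top}=0$ for every $\xi\in\Gamma(NM)$; since $V^{\top}=V$ is unit, every $A_{\xi}$ has a nontrivial kernel and hence zero determinant --- that is already the whole content of the claim, and no warped-product or rank argument is needed. Your stronger assertion $A_{\xi}\equiv 0$ is in fact false: the cone of Example \ref{anti-cone} (with $V=E/|E|$, which is everywhere tangent to the cone, so $V^{\perp}=0$) is not totally geodesic --- a circular cone in $\e^{3}$ has a nonzero principal curvature --- and Corollary \ref{anti-degenerate} records precisely that only the determinant of the shape operator vanishes. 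So the step ``$h(\cdot,V^{\top})=0$ plus the warped-product structure forces $h=0$'' cannot be carried out.

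Consequently your route to the curvature identity, via $h=0$ and the Gauss equation, collapses; note also that the identity is claimed only for the particular direction $Z=V^{\top}$, not for all $Z$. The correct argument uses \eqref{gauss-eq1} with $Z=V^{\top}$: all terms containing $h(\cdot,V^{\top})$ (the two shape-operator terms, $h([X,Y],V^{\top})$, and the two terms with the normal connection $D$) vanish, leaving
\[
\widetilde{R}(X,Y)V^{\top}=R(X,Y)V^{\top}+h(X,\nabla_Y V^{\top})-h(Y,\nabla_X V^{\top}).
\]
Then your own tangential equation $\nabla_Y V^{\top}=f\bigl(Y-g(Y,V^{\top})V^{\top}\bigr)$ gives $h(X,\nabla_Y V^{\top})=f\,h(X,Y)$ and, exchanging $X$ and $Y$, $h(Y,\nabla_X V^{\top})=f\,h(Y,X)$, so the last two terms cancel by the symmetry of $h$. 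This yields $\widetilde{R}(X,Y)V^{\top}=R(X,Y)V^{\top}$ without any total geodesy, which is exactly how the paper concludes.
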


\begin{proof}
Let $\widetilde{\nabla}$ be the Levi-Civita connection on $\widetilde{M}$. We distinguish the two cases:

{\it Case 1.} Assume that tangential component $V^{\top}=0$ on $M$. By the definition of an anti-torqued vector field and the formula of Weingarten, we have
$$
fX=\widetilde{\nabla}_{X} V^{\perp}=-A_{V^{\perp}}(X)+ D_X V^{\perp} , \quad \forall X\in \Gamma(TM),
$$
which implies
$$
D_X V^{\perp}=0, \quad A_{V^{\perp}}(X)=-fX, \quad \forall X \in \Gamma(TM).
$$
Then we gave the proof of the first item.

{\it Case 2.}  If the normal component $V^{\perp}=0$ on $M$, then by using the similar argument as in the first case, we get
\begin{equation*} 
f(X-  g(X,V^{\top})V^{\top})=\widetilde{\nabla}_{X} V^{\top}=\nabla_XV^{\top}+ h(X,V^{\top})  , \quad \forall X\in \Gamma(TM).
\end{equation*}
Here the normal component is identically $0$, namely
\begin{equation} \label{anti-case2-h2}
h(X,V^{\top})=0, \quad \forall X \in \Gamma(TM),
\end{equation}
or equivalently
$$
g(A_{\xi}(V^{\top}),X)=0, \quad \forall X \in \Gamma(TM), \quad \forall \xi \in \Gamma(NM),
$$
which implies that $A_{\xi}$ has determinant $0$. We next substitute Eq. \eqref{anti-case2-h2} in Eq. \eqref{gauss-eq1}, obtaining
$$
\widetilde{R}(X,Y)V^{\top}=R(X,Y)V^{\top}+h(X,\nabla_YV^{\top})-h(Y,\nabla_XV^{\top}), \quad \forall X,Y \in \Gamma(TM).
$$
By the definition of an anti-torqued vector field, we get
\begin{eqnarray*}
\widetilde{R}(X,Y)V^{\top}&=&R(X,Y)V^{\top}+f(h(X,Y)-g(Y,V^{\top})h(X,V^{\top})) \\ 
&-&f(h(Y,X)-g(X,V^{\top})h(Y,V^{\top})), \quad \forall X,Y \in \Gamma(TM).
\end{eqnarray*}
Considering Eq. \eqref{anti-case2-h2} in the above equation completes the proof.
\end{proof}

In particular cases, we indicate some consequences of Thm. \ref{anti-tan0-normal0} as well as examples. For this, let $\widetilde{M}$ be a Riemannian manifold endowed with a unit anti-torqued vector field $V$ and $M$ be a hypersurface of $\widetilde{M}$. If the tangential component $V^{\top}=0$ identically on $M$, then according to the first item of Proposition \ref{anti-tan0-normal0} it is totally umbilical in $\widetilde{M}$. When the ambient space $\widetilde{M}$ is the Euclidean space, the hyperspheres are the only hypersurfaces with $V^{\top}=0$. 

\begin{corollary} \label{anti-tot-um}
Let $\widetilde{M}$ be a Riemannian manifold endowed with an anti-torqued vector field $V$. Then, the totally umbilical hypersurfaces are the only hypersurfaces in $\widetilde{M}$ for which the tangential component of $V$ in $M$ is identically zero.
\end{corollary}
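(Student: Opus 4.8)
The plan is to obtain the corollary as an immediate specialization of Theorem \ref{anti-tan0-normal0}(1) to the codimension-one case, the extra input being that the normal bundle of a hypersurface is a real line bundle. As in \cite{dan} I would first assume $V$ to be unit, since rescaling $V$ by a positive function alters neither the vanishing of $V^{\top}$ nor the totally umbilical type of a hypersurface; alternatively, the computation in the proof of Theorem \ref{anti-tan0-normal0}(1) carries over verbatim for a non-unit $V$, because $\tilde{g}(X,V)=\tilde{g}(X,V^{\perp})=0$ for every $X$ tangent to a hypersurface $M$ on which $V^{\top}\equiv 0$.

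Next I would take a hypersurface $M$ of $\widetilde{M}$ with $V^{\top}\equiv 0$, so that $V=V^{\perp}$ on $M$, and observe that $V^{\perp}$ is nowhere zero because an anti-torqued vector field is nowhere zero. Theorem \ref{anti-tan0-normal0}(1) then gives that $V^{\perp}$ is parallel in the normal connection and is an umbilic direction, i.e. $A_{V^{\perp}}=-f\,\mathrm{Id}$ on $M$, where $f$ is the conformal scalar of $V$. The key step is to exploit that, $\widetilde{M}$ having dimension one more than $M$, the normal bundle $NM$ is a line bundle spanned at every point by the nowhere-vanishing section $V^{\perp}$: every $\xi\in\Gamma(NM)$ is thus of the form $\xi=\varphi\,V^{\perp}$ for a smooth function $\varphi$ on $M$, whence $A_{\xi}=\varphi\,A_{V^{\perp}}=-\varphi f\,\mathrm{Id}$, so $M$ is umbilical along every normal direction, i.e. totally umbilical in $\widetilde{M}$. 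Since any hypersurface with $V^{\top}\equiv 0$ is therefore totally umbilical — and the fibers of the warped product $\widetilde{M}=I\times_{\lambda}F$ furnished by Theorem \ref{nec-suf} show that such hypersurfaces do exist — this is exactly the assertion of the corollary.

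I do not expect a genuine obstacle. The only point requiring care is that \emph{totally umbilical} demands umbilicity along every normal direction, which in codimension one collapses to umbilicity along the single direction $V^{\perp}$ precisely because $V^{\perp}$ trivializes the one-dimensional normal bundle — and this trivialization is valid on all of $M$ only because $V^{\top}\equiv 0$ forces $V^{\perp}=V$ to be nowhere zero. Optionally I would add, for the Euclidean ambient space with the anti-torqued field $x/|x|$, that the hypersurfaces in question are exactly the hyperspheres centered at the origin, recovering the remark preceding the corollary, though this is not needed for the proof itself.
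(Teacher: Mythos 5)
Your proposal is correct and follows essentially the same route as the paper: the corollary is obtained directly from Theorem \ref{anti-tan0-normal0}(1), which gives $A_{V^{\perp}}=-f\,\mathrm{Id}$, combined with the observation that for a hypersurface the normal bundle is the line bundle spanned by the nowhere-vanishing $V^{\perp}=V$, so umbilicity in that single direction is total umbilicity. Your extra remarks on the unit normalization of $V$ and on the existence of such hypersurfaces are harmless additions not needed for (and not present in) the paper's argument.
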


In the following we give an example for a submanifold of codimension $2$ fulfilling the first item of Thm. \ref{anti-tan0-normal0}.

\begin{example}[$V^{\top}=0$]
Let $(x_1,x_2,x_3,x_4)$ be the canonical coordinates in $\e^4$ and $M=\mathbb{S}^1(r_1)\times \mathbb{S}^1(r_2)$ be the Clifford torus in $\e^4$ with $r_1^2+r_2^2=1$. Let also $E$ be the radial vector field in $\e^4$. Then, $V=E/|E|$ is a unit anti-torqued vector field on the connected Riemannian manifold $\e^4-\{0\}$ \cite{dan}. Let $\widetilde{\nabla}$ be the Levi-Civita connection on $\e^4$. Then,
$$
\widetilde{\nabla}_{\widetilde{X}}V=\frac{1}{|E|}(\widetilde{X}-\langle \widetilde{X},V\rangle V), 
$$
for every $\widetilde{X}$ tangent to $\e^4$. Here the conformal scalar $f$ of $V$ is
$$
f(x_1,...,x_4)=\left (\sum_{i=1}^4 x_i^2 \right )^{-1/2}, \quad (x_1,...,x_4) \in \e^4-\{0\}.
$$
Clearly, $V$ is normal to $M$. As the restriction $V|_M$ is radial, we have
$$
X=\widetilde{\nabla}_XV=-A_V(X)+D_XV, \quad \forall X\in\Gamma(TM),
$$
implying $V$ is parallel and $A_V=-X$, namely $V$ is an umbilic direction. 
\end{example}

Although the equality $V^{\perp}=0$ is not allowed in Definition \ref{def-rs}, it is naturally satisfied as long as $M$ is a hypersurface of $\widetilde{M}$. In such a case, by the second item of Theorem \ref{anti-tan0-normal0} the following consequence can be stated.

\begin{corollary} \label{anti-degenerate}
Let $\widetilde{M}$ be a Riemannian manifold endowed with an anti-torqued vector field $V$ and $M$ be a rectifying hypersurface with axis $V$. Then, the shape operator of $M$ has zero determinant. 
\end{corollary}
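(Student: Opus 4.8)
The plan is to use that $M$ has codimension one, so each normal space $N_pM$ is a line. Since $M$ is a rectifying hypersurface with axis $V$, Definition \ref{def-rs} forces $V^{\perp}$ to be nowhere zero on $M$; hence, for a local unit normal $\xi$, the function $\tilde g(V,\xi)$ never vanishes and $V^{\perp}=\tilde g(V,\xi)\,\xi$ spans $N_pM$ at every point. Writing the second fundamental form as $h(X,Y)=g(A_{\xi}X,Y)\,\xi$, the rectifying condition $\tilde g\big(V(p),\text{Im }h_p\big)=0$ becomes $\tilde g(V,\xi)\,g(A_{\xi}X,Y)=0$ for all $X,Y\in T_pM$; since $\tilde g(V,\xi)\neq0$, this gives $A_{\xi}\equiv0$, hence $h\equiv0$ and $\tilde g(A_{\eta}X,Y)=\tilde g(h(X,Y),\eta)=0$ for every $\eta\in\Gamma(NM)$. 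In particular $\det A_{\eta}=0$, which is the assertion (and, a fortiori, $M$ is totally geodesic in $\widetilde M$).

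An alternative, which is the route indicated by the paragraph preceding the statement, is to feed \eqref{rs-intro} only the pair $(X,V^{\top})$: since $h(X,V^{\top})\in N_pM$ and $V^{\perp}$ spans $N_pM$, the relation $\tilde g\big(V^{\perp},h(X,V^{\top})\big)=0$ already yields $h(X,V^{\top})=0$ for all $X\in\Gamma(TM)$, which is exactly \eqref{anti-case2-h2}. Repeating the computation in the proof of Theorem \ref{anti-tan0-normal0}(2) then gives $g(A_{\xi}V^{\top},X)=0$ for all $X$ and all $\xi$, so $A_{\xi}$ is singular whenever $V^{\top}\neq0$; in the remaining case $V^{\top}\equiv0$ the first paragraph already produced $h\equiv0$, so $A_{\eta}=0$ and the determinant vanishes anyway (this case in fact forces $f\equiv 0$, i.e. $V$ parallel, by Theorem \ref{anti-tan0-normal0}(1)).

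I do not expect a genuine obstacle; the one thing that warrants a comment is the apparent conflict between the hypothesis $V^{\perp}=0$ of Theorem \ref{anti-tan0-normal0}(2) and the fact that a rectifying submanifold has $V^{\perp}\neq0$ by definition. The resolution is that the proof of that item uses only the weaker identity $h(\cdot,V^{\top})=0$, and on a hypersurface this is available for the unrelated reason that the normal bundle has rank one. I would therefore present the rank-one argument of the first paragraph as the main proof and mention the comparison with Theorem \ref{anti-tan0-normal0} only in passing.
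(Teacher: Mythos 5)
Your argument is valid for the statement as literally written, but it is not the paper's route, and the difference matters. Reading Definition \ref{def-rs} strictly ($V^{\perp}\neq 0$ everywhere on $M$), your rank-one observation does force $\text{Im}\,h_p\perp V^{\perp}$ to give $h\equiv 0$, so $M$ is totally geodesic and $\det A=0$ trivially; note that this argument never uses the anti-torqued hypothesis at all. The paper does the opposite: the sentence immediately preceding the corollary deliberately relaxes the definition for hypersurfaces, observing that in codimension one the rectifying condition naturally degenerates to the case $V^{\perp}=0$ (at any point where $h_p\neq 0$ the first normal space is the whole normal line, so orthogonality to it means the axis is tangent), and the corollary is then read as an instance of Theorem \ref{anti-tan0-normal0}(2): the normal component of the anti-torqued equation gives $h(X,V^{\top})=0$, hence $A_{\xi}V^{\top}=0$ with $V^{\top}=V\neq 0$, so the shape operator is singular. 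That is the reading consistent with Examples \ref{tang-dev} and \ref{anti-cone} (tangent developable, cone), which have $V$ tangent and are not totally geodesic; under your strict reading they would not be rectifying hypersurfaces at all, and the corollary would only ever speak about totally geodesic hypersurfaces. Your second paragraph gestures at the paper's route, but both of your derivations of $h(X,V^{\top})=0$ still require $V^{\perp}$ to span the normal line, so neither covers the case the paper actually intends ($V^{\perp}=0$), where that identity comes from the anti-torqued equation rather than from \eqref{rs-intro}. In short: as a proof of the literal statement your argument is complete (and yields the stronger but degenerate conclusion that $M$ is totally geodesic); to capture the intended content you should treat the tangent-axis case and quote Theorem \ref{anti-tan0-normal0}(2), which is exactly what the paper does.
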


In the following we exhibit two examples of such hypersurfaces with $|A|=0$.

\begin{example}[$V^{\perp}=0$] \label{tang-dev}
Let $c(s)$ be a unit speed geodesic circle in $\s^2\subset\e^3$ and $M$ be a tangent developable parametrized by
$$
r(s,t)=c(s)+t c'(s), \quad s\in I\subset \r, \quad t\in \r.
$$
The unit normal vector field to $M$ is $-c(s)\times c'(s)$. If we take the anti-torqued vector field $V=E/|E|$ on $\e^3-\{0\}$, then we see $V^{\perp}=0$ on $M$. Clearly, the shape operator of every tangent developable in $\e^3$ has zero determinant (see \cite{gr}).
\end{example}

\begin{example}[$V^{\perp}=0$] \label{anti-cone}
Let $M$ be a cone in $\e^m$ with vertex at the origin given in implicit form
$$
\varphi(x_1,...,x_m)=x_1^2+...+x_{m-1}^2-x_m^2=0, \quad (x_1,...,x_m) \neq (0,...,0).
$$
Let also $V=E/|E|$ the unit anti-torqued vector field on $\e^m-\{0\}$. As $V|_M$ is perpendicular to $\text{grad } \varphi$, $V|_M$ is always tangent to $M$, where $\text{grad}$ is the gradient of $\e^m$.
\end{example}

Let $M$ be a submanifold of $\widetilde{M}$ and $V$ an anti-torqued vector field on $\widetilde{M}$. Assume that the normal component $V^{\perp}=0$ identically on $M$. By the second item of Thm. \ref{anti-tan0-normal0}, we have
$$
\tilde{g}(\tilde{R}(X,V^{\top})V^{\top},X)=g(R(X,V^{\top})V^{\top},X), \quad \forall X\in \Gamma(TM).
$$
If $\pi=\text{Span}\{X,V^{\top}\}$ is any plane section at $x\in M$, then the above equation yields
\begin{equation}\label{anti-sec}
\widetilde{K}(\pi)=K(\pi).
\end{equation}
Now we go back to Example \ref{tang-dev} and recall that any tangent developable has vanishing Gaussian curvature. In view of that the sectional curvature of a surface in $\e^3$ is equivalent to the Gaussian curvature, we see that \eqref{anti-sec} is clearly satisfied for the surface given in Example \ref{tang-dev}.

\subsection{Characterizations via torqued vector fields} \label{charac-sub-torq}

We characterize submanifolds through the torqued vector fields $V$ as we did in the previous subsection. Let $\omega$ and $f$ be the torqued form and conformal scalar of $V$. If we denote by $W$ the dual of $\omega$, then we know $\tilde{g}(V,W)=0$. The natural orthogonal decompositions of $V$ and $W$ in $M$ are given uniquely by
\begin{equation} \label{tor-v}
V=V^{\top}+ V^{\perp}, \quad W=W^{\top}+ W^{\perp}.
\end{equation}

\begin{proposition} \label{torqued-normal0}
Let $\widetilde{M}$ be a Riemannian manifold endowed with a torqued vector field $V$, $M$ a submanifold of $\widetilde{M}$, and $A$ the shape operator of $M$.
\begin{enumerate}

\item If $V^{\perp}=0$ identically on $M$, then $V^{\top}$ is a concircular vector field on $M$ and, for every $\xi \in \Gamma(NM)$, the shape operator $A_{\xi}$ has zero determinant.

\item If $V^{\top}=0$ identically on $M$, then $V^{\perp}$ is an umbilical direction and 
$$
D_XV^{\perp}=0, \quad D_{W^{\top}}V^{\perp}=|W^{\top}|^2V^{\perp},
$$
for every $X\perp W^{\top}$ tangent to $M$ and the normal connection $D$ on $\Gamma(NM)$.
\end{enumerate}
\end{proposition}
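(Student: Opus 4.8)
\emph{Proof strategy.} The plan is to confront the defining identity of a torqued vector field, $\widetilde{\nabla}_X V = fX + \omega(X)V$ with $\omega(V)=0$ (see \eqref{tors}), against the formulas of Gauss and Weingarten, and then to read off the tangential and normal components separately, just as in the proof of Theorem \ref{anti-tan0-normal0}. One preliminary observation I will use throughout is that, since $W$ is dual to $\omega$, for any $X$ tangent to $M$ we have $\omega(X)=\tilde{g}(X,W)=g(X,W^{\top})$, because $\tilde{g}(X,W^{\perp})=0$.

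For item (1) I would assume $V^{\perp}=0$, so that $V=V^{\top}$ is tangent to $M$ and, since $V$ is nowhere zero, so is $V^{\top}$. The Gauss formula gives $\widetilde{\nabla}_X V=\nabla_X V^{\top}+h(X,V^{\top})$, while the torqued identity gives $\widetilde{\nabla}_X V=fX+\omega(X)V^{\top}$, which is tangent to $M$. Comparing normal parts forces $h(X,V^{\top})=0$ for every $X\in\Gamma(TM)$; hence $g(A_{\xi}V^{\top},X)=\tilde{g}(h(X,V^{\top}),\xi)=0$ for all $\xi\in\Gamma(NM)$, so $V^{\top}$ lies in the kernel of every $A_{\xi}$ and, being nonzero, forces $A_{\xi}$ to have zero determinant. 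Comparing tangential parts gives $\nabla_X V^{\top}=fX+\omega(X)V^{\top}$, that is, $V^{\top}$ is torse-forming on $M$ with generating form the restriction of $\omega$ to $TM$, a form which annihilates $V^{\top}$ because $\omega(V)=0$. The remaining, and in my view the only substantive, step is to upgrade this to a concircular field, i.e.\ to show $\omega|_{TM}\equiv 0$, equivalently that $W$ stays normal to $M$; here I would try to combine $h(X,V^{\top})=0$ and $\tilde{g}(V,W)=0$ with the local model $\widetilde{M}=I\times_{\lambda}F$ (in which $V$ is a multiple of $\partial/\partial s$ and $W$ is tangent to the fibres) coming from the structure theorem for torqued fields, after which $\nabla_X V^{\top}=fX$ displays $V^{\top}$ as concircular on $M$. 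I expect this passage from torse-forming to concircular to be the main obstacle.

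For item (2) I would assume $V^{\top}=0$, so that $V=V^{\perp}$ is a nowhere-zero normal field. The Weingarten formula gives $\widetilde{\nabla}_X V=-A_{V^{\perp}}X+D_X V^{\perp}$, and the torqued identity gives $\widetilde{\nabla}_X V=fX+g(X,W^{\top})V^{\perp}$. Comparing the (mutually orthogonal) tangential parts yields $A_{V^{\perp}}X=-fX$ for all $X\in\Gamma(TM)$, so $A_{V^{\perp}}=-f\,\text{Id}$ and $V^{\perp}$ is an umbilical direction. Comparing normal parts yields $D_X V^{\perp}=g(X,W^{\top})V^{\perp}$; specialising to $X\perp W^{\top}$ gives $D_X V^{\perp}=0$, and specialising to $X=W^{\top}$ gives $D_{W^{\top}}V^{\perp}=|W^{\top}|^{2}V^{\perp}$, which is exactly the claim. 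This item relies only on the orthogonal decomposition and I do not anticipate any further difficulty there.
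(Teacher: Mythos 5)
Your treatment of item (2) is complete and coincides with the paper's: Weingarten together with the torqued identity, comparison of tangential parts giving $A_{V^{\perp}}=-f\,\mathrm{Id}$, comparison of normal parts giving $D_XV^{\perp}=g(X,W^{\top})V^{\perp}$, and then the two specialisations $X\perp W^{\top}$ and $X=W^{\top}$. The same is true of the determinant claim in item (1): $h(X,V^{\top})=0$, hence $g(A_{\xi}V^{\top},X)=0$ for all $X$ and $\xi$, and $V^{\top}\neq 0$ forces $\det A_{\xi}=0$, exactly as in the paper.

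The genuine gap is where you flagged it: your argument only shows that $V^{\top}$ is torse-forming on $M$ with generating form $\omega|_{TM}$, and the concircularity assertion is left as a strategy rather than a proof. That strategy does not close. What is needed is $\omega|_{TM}=0$, i.e. $W^{\top}=0$, and neither $h(X,V^{\top})=0$ nor $\tilde g(V,W)=0$ yields this: the latter says only that $W\perp V$, not that $W\perp T_pM$. The twisted-product model does not rescue it either; there $V=\lambda\,\partial/\partial s$ and $W$ is the fibre gradient of $\log\lambda$, and a submanifold of the form $I\times\gamma$, with $\gamma$ a curve in $F$ along which $\lambda$ varies at fixed $s$, is tangent to $V$ while $W^{\top}\neq 0$, so on such an $M$ one obtains $\nabla_XV^{\top}=fX+\omega(X)V^{\top}$ with $\omega|_{TM}\neq 0$, which is torse-forming but not concircular. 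For comparison, the paper's own proof settles this point in a single line, asserting from \eqref{tor-v} that $V|_M\in\Gamma(TM)$ forces $W|_M\in\Gamma(NM)$; that assertion is exactly the statement you could not derive, and it does not follow from the orthogonal decomposition together with $\tilde g(V,W)=0$, so your hesitation is well placed — there is no hidden argument in the paper that you overlooked. Nevertheless, as written your proposal does not establish the concircularity claim of item (1); to obtain it one must either justify $W^{\top}=0$ under an additional hypothesis or weaken the conclusion to ``$V^{\top}$ is torse-forming with generating form $\omega|_{TM}$''.
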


\begin{proof}
We distinguish two cases:
 
{\it Case 1.}  Assume that $V|_M\in\Gamma(TM)$. By \eqref{tor-v} we have $W|_M\in\Gamma(NM) $. As $V$ is a torqued vector field, we get 
$$
\widetilde{\nabla}_{X}V=fX+\tilde{g}(W,X)V=fX, \quad \forall X \in \Gamma (TM).
$$
From the formula of Gauss, we see $\nabla_{X}V=fX$, namely $V|_M$ is concircular on $M$. Also, we conclude $h(X,V)=0$, for every $X$ tangent to $M$ which is equivalent to $|A_{\xi}|=0$, for every $\xi$ normal to $M$.

{\it Case 2.} If $V|_M\in\Gamma(NM)$, then from \eqref{tor-v} it follows $W|_M\in\Gamma(NM)$. By the definition of a torqued vector field and the formula of Weingarten, we write
$$
-A_V(X) +D_XV=\widetilde{\nabla}_{X}V=f X+ g(W,X)V, \quad \forall X \in \Gamma (TM).
$$
Comparing the tangential and normal components we complete the proof.

\end{proof}

\section{Rectifying submanifolds with anti-torqued axis} \label{rec-sub-anti}

Let $\widetilde{M}$ be a Riemannian $m$-manifold and $M$ be an $n$-dimensional submanifold of $\widetilde{M}$. Let also $V$ be a torqued vector field on $\widetilde{M}$ with $V^{\top}\neq 0$ on $M$. Recall that $M$ is a rectifying submanifold with torqued axis $V$ if and only if $V^{\top}$ is a torse-forming vector field on $M$ such that the  conformal scalar and the generating form are the restrictions of the torqued function and the torqued form on $M$, respectively \cite[Theorem 4.1]{crs3}. 

Motivated by the above result we will classify the rectifying submanifolds $\widetilde{M}$ with anti-torqued axis. We say that $M$ is {\it proper} if $V^{\top}\neq 0$ and $V^{\perp}\neq 0$ on $M$. 

\begin{theorem} \label{rect-sub-anti}
Let $\widetilde{M}$ be a Riemannian $m$-manifold endowed with an anti-torqued vector field $V$ and $M$ be an $n$-dimensional proper rectifying submanifold with axis $V$. Assume that $1<n<m-1$. Then, $M$ is locally the warped product $J\times_{\lambda} N$, where $J$ is an open interval, $N$ is a Riemannian $(n-1)$-manifold and the metric tensor of $M$ is
\begin{equation} \label{anti-rec-met}
g=ds^2+\left (\tanh\int^sf(u)du \right )^2g_N, \quad \frac{V^{\top}}{|V^{\top}|}=\frac{\partial}{\partial s},
\end{equation}
where $f$ is the conformal scalar of $V$ and $g_N$ is the metric tensor of $N$.
\end{theorem}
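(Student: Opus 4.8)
The plan is to show that the tangential component $V^{\top}$ of the axis is itself an anti-torqued vector field on $M$, then to invoke Theorem \ref{nec-suf} to produce the warped product structure, and finally to identify the warping function by solving the ordinary differential equation that the proof of Theorem \ref{nec-suf} delivers.

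First I would extract the consequences of the rectifying hypothesis. Since $\tilde g(V,\operatorname{Im}h_p)=0$ and $V^{\top}$ is tangent, this is the same as $\tilde g(V^{\perp},h(X,Y))=0$, i.e. $g(A_{V^{\perp}}X,Y)=0$ for all $X,Y\in\Gamma(TM)$, so $A_{V^{\perp}}=0$. Decomposing $\widetilde\nabla_X V$ along $M$ via the formulas of Gauss and Weingarten and using $A_{V^{\perp}}=0$, one gets $\widetilde\nabla_X V=\nabla_X V^{\top}+h(X,V^{\top})+D_X V^{\perp}$. On the other hand, since $X$ is tangent, $\tilde g(X,V)=g(X,V^{\top})$, so the right-hand side of \eqref{a-tor} equals $f(X-g(X,V^{\top})V^{\top})-f\,g(X,V^{\top})V^{\perp}$. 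Comparing tangential components yields
\[
\nabla_X V^{\top}=f\bigl(X-g(X,V^{\top})V^{\top}\bigr),\qquad \forall X\in\Gamma(TM),
\]
which is precisely the assertion that $V^{\top}$ is an anti-torqued vector field on $M$ with conformal scalar $f|_M$; comparing normal components gives the auxiliary relation $h(X,V^{\top})+D_X V^{\perp}=-f\,g(X,V^{\top})V^{\perp}$, which is not needed for the present statement.

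Because $M$ is proper, $V^{\top}$ is (locally) nowhere zero, and because $1<n$ the fibre dimension $n-1$ is positive; hence Theorem \ref{nec-suf}, applied to the intrinsic Riemannian manifold $(M,g)$ equipped with the anti-torqued vector field $V^{\top}$, shows that $M$ is locally a warped product $J\times_{\lambda}N$ with
\[
g=ds^2+\lambda^2 g_N,\qquad \frac{\partial}{\partial s}=\frac{V^{\top}}{|V^{\top}|},\qquad \lambda=|V^{\top}|,
\]
where, reading off the proof of Theorem \ref{nec-suf}, the warping function $\lambda$ depends only on $s$ and satisfies $d\lambda/ds=f(1-\lambda^2)$ (with $f=f|_M$, which is then also a function of $s$ alone). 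It remains to integrate this equation: since $V$ is unit and $V^{\perp}\neq 0$ we have $0<\lambda=|V^{\top}|<1$, so $1-\lambda^2>0$ and separation of variables gives $\operatorname{arctanh}\lambda=\int^s f(u)\,du$ once the constant of integration is absorbed into the choice of antiderivative; hence $\lambda(s)=\tanh\int^s f(u)\,du$, and substituting into $g=ds^2+\lambda^2 g_N$ gives \eqref{anti-rec-met}.

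I expect the principal point requiring care to be the first step — verifying that $V^{\top}$ satisfies the defining relation of an anti-torqued field, and not merely the more general torse-forming relation, which is exactly where the identity $\tilde g(X,V)=g(X,V^{\top})$ for tangent $X$ enters — together with checking that Theorem \ref{nec-suf} is legitimately applicable to $(M,g)$; everything afterwards is the bookkeeping of the warped-product decomposition plus an elementary integration. The dimension restriction $1<n<m-1$ is used in a supporting way: the lower bound ensures the fibre $N$ is a genuine manifold, while the upper bound keeps the codimension at least two, ruling out the hypersurface case (in which a proper rectifying submanifold would be forced to be totally geodesic).
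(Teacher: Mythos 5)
Your proposal is essentially the paper's own argument: the rectifying hypothesis gives $A_{V^{\perp}}=0$; comparing tangential parts in the Gauss--Weingarten decomposition of \eqref{a-tor} gives $\nabla_X V^{\top}=f(X-g(X,V^{\top})V^{\top})$, which is the paper's Eq. \eqref{anti-rec-t}; and the paper then reruns the frame computation from the proof of Theorem \ref{nec-suf} on $M$ (its Eq. \eqref{eje1}) rather than citing that theorem, finishing with the same first integration of $d\lambda/ds=f(1-\lambda^{2})$ that you perform. Your only departures are presentational: you invoke Theorem \ref{nec-suf} instead of repeating its computation, and you make explicit that properness together with $|V|=1$ gives $0<\lambda=|V^{\top}|<1$, which selects the $\tanh$ branch of the integration that the paper leaves implicit.

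That said, the step you describe as ``reading off the proof of Theorem \ref{nec-suf}'' --- that the warping function of the decomposition equals $\lambda=|V^{\top}|$ --- is exactly the point where neither your argument nor the paper's is justified, and it is the step that fails. What is actually derived from \eqref{anti-rec-t} (writing $V^{\top}=\lambda E_1$ with $E_1$ unit) is $\nabla_{E_1}E_1=0$, $E_j(\lambda)=0$, $E_1(\lambda)=f(1-\lambda^{2})$ and $\nabla_{E_j}E_1=(f/\lambda)E_j$; the last relation says the leaves orthogonal to $E_1$ are umbilical with factor $f/\lambda$, so the warping function $\rho$ of the warped product produced by the Ponge--Reckziegel argument satisfies $\rho'/\rho=f/\lambda$, whereas $\rho=\lambda$ would force $\lambda'=f$, which contradicts $\lambda'=f(1-\lambda^{2})$ unless $f\lambda\equiv 0$. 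Equivalently, in a metric $ds^{2}+\lambda(s)^{2}g_{N}$ with $V^{\top}=\lambda\,\partial/\partial s$ one computes $\nabla_X V^{\top}=\lambda' X$ for $X$ tangent to the fiber, while \eqref{anti-rec-t} demands $fX$. With $\lambda=\tanh\int^{s}f$, the umbilicity relation integrates instead to $\rho=\sinh\int^{s}f$ (constant absorbed into $g_{N}$). The Euclidean model bears this out: for $V=E/|E|$ the rectifying condition $A_{V^{\perp}}=0$ makes the tangential part of the position vector a concurrent field, so such an $M$ is intrinsically a metric cone $ds^{2}+s^{2}g_{N}$, which is of the $\sinh$ form and not of the form \eqref{anti-rec-met}. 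So as a reconstruction of the paper's proof your proposal is faithful (it reproduces the paper's reasoning, including this leap), but the identification of $|V^{\top}|$ with the warping function --- and hence the precise formula \eqref{anti-rec-met} --- does not follow from the argument as given.
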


\begin{proof}
By Definition \ref{def-rs}, we write
$$
\tilde{g}(V^{\perp},h(X,Y))=0, \quad \forall X,Y\in \Gamma(TM),
$$
which immediately yields $A_{V^{\perp}}=0$ on $M$. Also, from the definition of anti-torqued vector field it follows
\begin{equation}
\widetilde{\nabla}_X (V^{\top}+ V^{\perp}) = f\{X- g( X,V^{\top})(V^{\top}+ V^{\perp})\}, \label{XTN}
\end{equation}
where $\widetilde{\nabla}$ is the Levi-Civita connection on $\widetilde{M}$. Using the formulas of Gauss and Weingarten, one gets
\begin{equation}
\widetilde{\nabla}_{X} (V^{\top}+ V^{\perp}) = \nabla_X V^{\top}+ h(X ,V^{\top})+ D_X  V^{\perp} \label{XTN-2}.
\end{equation}
Comparing the tangential components in Eqs. \eqref{XTN} and \eqref{XTN-2}, we deduce
\begin{equation} \label{anti-rec-t}
fX=f g( X,V^{\top})V^{\top}+ \nabla_X V^{\top}.
\end{equation}
Let $E_1$ be a unit tangent vector field and $V^{\top}=\lambda E_1$, for a positive smooth function $\lambda$ on $M$. We may extend it to a local orthonormal frame $\{E_1,E_2,...,E_n\}$ on $M$. Writing $X=E_1$ in Eq. \eqref{anti-rec-t}, we have
\begin{equation}\label{anti-rec-e1}
\{(-1+\lambda^2)f +E_1(\lambda)\}E_1 +\lambda\nabla_{E_1} E_1=0,
\end{equation}
where $\nabla_{E_1} E_1=0$ because $E_1 \perp \nabla_{E_1} E_1$. Analogously, plugging $X=E_j$ ($j=2,...,n$) into Eq. \eqref{anti-rec-t}, we obtain 
$$
f E_j = E_j(\lambda)E_1+\lambda \nabla_{E_j} E_1.
$$ 
Taking inner product of this equation by $E_1$, we get
\begin{equation}\label{eje1}
\nabla_{E_j} E_1 = \frac{f}{\lambda}E_j, \quad E_j(\lambda)=0,
\end{equation}
for every $j=2,...,n$. Therefore the connection forms of $M$ are
\begin{eqnarray*}
\omega_1^i(E_1)&=&0, \quad \quad i \in \{1,...,n\}, \\ \omega_1^k(E_j)&=&\frac{f}{\lambda}\delta_{jk}, \quad j,k \in \{2,...,n\}.
\end{eqnarray*}
Using similar argumens as in the proof of Theorem \ref{nec-suf}, we may show that $M$ is locally the warped product $J\times_{\lambda} N$ such that the metric tensor of $M$ is given by 
$$
g=ds^2+\lambda^2g_N,
$$
where $\lambda$ is the warping function and $g_N$ is the metric tensor of $N$. Moreover, Eq. \eqref{anti-rec-e1} is now
$$
(-1+\lambda^2)f +\frac{d\lambda}{ds}=0.
$$
The proof is completed by a first integration.
\end{proof}
 
\begin{remark} \label{euc-ref}
Let $E$ be the radial vector field in $\e^m$. We know that $V=E/|E|$ is the anti-torqued vector field on $\e^m-\{0\}$. Accordingly, every rectifying submanifold $M$ in $\e^m$ with axis $V$ is a rectifying submanifold in the sense of \cite{crs0}, namely if $\Psi:M\to\e^m-\{0\}$ an isometric immersion of $M$ into $\e^m-\{0\}$, then  we get
$$
V|_M=V\circ\Psi=\frac{\Psi}{|\Psi|}
$$
and, by Definition \ref{def-rs},
$$
0=\langle \frac{\Psi(p)}{|\Psi(p)|}, \text{Im }h_p \rangle=\langle \Psi(p), \text{Im }h_p \rangle, \quad \forall p\in M.
$$
\end{remark}

With Remark \ref{euc-ref}, in the Euclidean setting, we are able to establish an example of rectifying submanifolds with anti-torqued axis. For this, recall that by a {\it spherical submanifold} of $\e^m$ we mean a submanifold contained in a hypersphere of $\e^m$.

\begin{example}
Let $\Psi:M\to\e^m-\{0\}$ be an isometric immersion of a Riemannian manifold $M$ into $\e^m$ given by
$$
\Psi(s,t_2,...,t_n)=\sqrt{1+s^2}\Omega(s,t_2,...,t_n),
$$
where $(s,t_2,...,t_n)$ is a local coordinate system on $M$ and $\Omega$ is the isometric immersion of a spherical submanifold into $\e^m-\{0\}$ whose metric tensor is given by
\begin{equation}\label{omeg}
g_{\Omega}=\frac{1}{(1+s^2)^2}(ds^2+s^2\sum_{i,j=2}^ng_{ij}(t_2,...,t_n)dt_idt_j).
\end{equation}
From \cite[Theorem 4.2]{crs0}, we know that $\Psi^{\top}\neq0\neq\Psi^{\perp}$ and the following relation is satisfied
$$
\langle \Psi(p), \text{Im }h_p \rangle=0, \quad \forall p\in M.
$$
This implies that $M$ is a proper rectifying submanifold of $\e^m$ whose axis is the anti-torqued vector field $V=E/|E|$. We will show that the metric tensor of $M$ is given as in Eq. \eqref{anti-rec-met}. Notice here that the conformal scalar $f$ of $V$ is $\frac{1}{|E|}$ and so the restriction $f|_M$ fulfills
$$
(f\circ\Psi)(s,t_2,...,t_n)=\frac{1}{|\Psi(s,t_2,...,t_n)|}=\frac{1}{\sqrt{1+s^2}},
$$
where $\langle \Omega, \Omega \rangle =1$ is used. The partial derivatives of $\Psi$ are
\begin{eqnarray*}
\frac{\partial \Psi}{\partial s}&=&\frac{s}{\sqrt{1+s^2}}\Omega+\sqrt{1+s^2}\frac{\partial \Omega}{\partial s}, \\
\frac{\partial \Psi}{\partial t_i}&=&\sqrt{1+s^2}\frac{\partial \Omega}{\partial t_i}.
\end{eqnarray*}
If we consider \eqref{omeg}, then the metric tensor of $M$ is given by
$$
g_{\Psi}=ds^2+\frac{s^2}{1+s^2}\sum_{i,j=2}^ng_{ij}(t_2,...,t_n)dt_idt_j,
$$
where the warping function  of $M$ is
$$
\lambda(s)=\frac{s}{\sqrt{1+s^2}}.
$$ 
Up to a suitable integration constant, the integral given in \eqref{anti-rec-met} is verified, namely
\begin{eqnarray*}
\tanh \int^s f(u)du &=&\tanh \int^s \frac{du}{\sqrt{1+u^2}} \\
&=& \tanh (\sinh^{-1}(s)) \\
&=&\frac{s}{\sqrt{1+s^2}}.
\end{eqnarray*}
\end{example}

\begin{remark}
W point out that it can be found many examples of spherical submanbifolds in $\e^m$ whose metric tensor is given by \eqref{omeg}. For example, introducing a new variable $\sigma=\tan^{-1}(s)$, one gets
$$
\frac{s}{1+s^2}=\frac{1}{2}\sin(2\sigma).
$$
If we consider it in \eqref{omeg}, we find
$$
g_{\Omega}=d\sigma^2+\frac{1}{4}\sin^2(2\sigma)\sum_{i,j=2}^ng_{ij}(t_2,...,t_n)dt_idt_j.
$$
When the dimension of the submanifold is $2$, it is the metric tensor of spherical coordinate system $(\sigma,t)$ on $\s^1(\frac{1}{2})$.
\end{remark}

\section{Final Remarks}

We notice that our examples are all in the Euclidean setting. This is because this model space of Riemannian geometry globally admits an anti-torqued vector field. Inspite, the spheres globally do not admit such vector fields (see \cite{id}) and it is an open problem if the hyperpolic space globally admits a torqued or an anti-torqued vector field. We currently process this problem in \cite{amc}. 

On the other hand, if the submanifold is a curve or a hypersurface, then the notion of rectifying submanifold can be generalized. In \cite{amc1}, we introduce a new concept which we call {\it pointwise rectifying submanifold} in Riemannian manifolds and study pointwise rectifying curves whose pointwise function is a constant and axis is an anti-torqued vector field.

\section*{Acknowledgements}
This study was supported by Scientific and Technological Research Council of Turkey (TUBITAK) under the Grant Number (123F451). The authors thank to TUBITAK for their supports.


\medskip

 \end{document}